\newcommand{\sse}{\subseteq}
\newcommand{\N}{\mathbb{N}}
\newcommand{\ra}{\rightarrow}
\newcommand{\Clo}{\mathrm{Clo}}
\newcommand{\sM}{\mathcal{M}}
\newcommand{\sC}{\mathcal{C}}
\newcommand{\fP}{\mathfrak{P}}
\newcommand{\preserves}{\rhd}
\newcommand{\npreserves}{\ntriangleright}
\newcommand{\Ra}{\Rightarrow}
\newcommand{\set}[1]{\left\{ #1 \right\}}
\newcommand{\Maj}{\mathrm{Maj}}
\theoremstyle{plain}
\newtheorem{theorem}{Theorem}[section]
\newtheorem{corollary}[theorem]{Corollary}
\newtheorem{lemma}[theorem]{Lemma}
\date{\today}
\address{Institute of Algebra, Technische Universität Dresden, 01069 Dresden, Germany} 
\keywords{clones, Baker-Pixley, conservative, near-unanimity, generation, lower bound}
\email{johannes-greiner@gmx.de}
\begin{document}

\title{Generating clones with conservative near-unanimity operation}

\author{Johannes Greiner}

\maketitle

\begin{abstract}
 Due to the Baker-Pixley theorem we know that every clone over a finite domain $A$ containing a near-unanimity operation $g$ is finitely generated.  Therefore there exists an integer $k$ such that the clone is generated by its $k$-ary part. In this paper we are interested in the size of $k$ for a fixed $A$ and fixed arity of a conservative $g$. We obtain lower bounds for all arities and they turn out to be sharp for arity three. 
\end{abstract}

\section{Introduction}
Let us define $A:=\set{0,1,2,\ldots, |A|-1} \sse \N_0$ to be a finite set.
For $d\in \N$, $d\geq 2$ an operation $g\colon A^{d+1} \ra A$ is said to be a \emph{near-unanimity operation} (nu-operation) if it reflects arguments which are unanimous except for one argument, which means for all $x,y\in A$ we have
\begin{displaymath}
g(x, \ldots, x,y)= g(x, \ldots, x,y,x)= \ldots = g(y,x, \ldots,x) = x.
\end{displaymath}
If the arguments $x_1, \ldots, x_{d+1}$ of a function follow this pattern they will be called \emph{near-unanimous} and the value in prevalence ($x$ here) will be denoted by $\Maj(x_1, \ldots, x_{d+1})$ in that case.\\

Now recall the well-known result by Baker and Pixley:
\begin{theorem}[\cite{BakerPixleyPolyInterpol}]
If a clone $C\leq O_A$ contains a $(d+1)$-ary near-unanimity operation, then there exists $k\in \N$ such that the $k$-ary operations of $C$ are sufficient to generate $C$.
\end{theorem}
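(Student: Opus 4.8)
The plan is to prove the theorem via the interpolation property forced by a near-unanimity operation, and then to convert that interpolation into a concrete bound on the arity by an argument that identifies variables. Throughout, fix the $(d+1)$-ary near-unanimity operation $g\in C$, write $C^{(k)}$ for the set of $k$-ary operations of $C$, and let $D:=\Clo(C^{(k)})$ be the clone they generate. The goal is to choose $k$ with $D=C$; I will argue that $k:=\max(d+1,|A|^d)$ works. Note first that, since $d+1\le k$, one has $g\in D$, so $D$ itself contains a $(d+1)$-ary near-unanimity operation; this lets me apply the interpolation lemma below to $D$ and not only to $C$.

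The heart of the argument is the following interpolation lemma: \emph{if a clone $E\le O_A$ contains a $(d+1)$-ary near-unanimity operation and $f\colon A^n\ra A$ is an operation such that for every $S\sse A^n$ with $|S|\le d$ there exists $h\in E$ with $h|_S=f|_S$, then $f\in E$.} I would prove this by showing, by induction on $|B|$, that $f$ can be interpolated by a member of $E$ on every finite $B\sse A^n$; taking $B=A^n$ then yields $f\in E$. The base case $|B|\le d$ is the hypothesis. For the step, given $B$ with $|B|=r+1>d$, pick $d+1$ distinct points $p_1,\ldots,p_{d+1}\in B$ and choose $h_i\in E$ interpolating $f$ on $B\setminus\set{p_i}$ (possible by induction, since $|B\setminus\set{p_i}|=r$). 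Then $h:=g(h_1,\ldots,h_{d+1})\in E$ works: at every point $b\in B$ at most one of the sets $B\setminus\set{p_i}$ omits $b$, so at least $d$ of the values $h_i(b)$ equal $f(b)$, whence near-unanimity forces $g(h_1(b),\ldots,h_{d+1}(b))=f(b)$. This gluing via $g$ is the main obstacle and the only place where near-unanimity is used.

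It remains to verify the hypothesis of the lemma for $E=D$ and an arbitrary $f\in C$ of arity $n$. Fix $S=\set{s_1,\ldots,s_d}\sse A^n$, and classify the coordinates $1,\ldots,n$ by the column they induce: coordinate $j$ gets the tuple $(s_1(j),\ldots,s_d(j))\in A^d$. There are at most $|A|^d$ distinct columns $c_1,\ldots,c_m$ with $m\le|A|^d$; let $\sigma(j)$ be the index of the column of $j$, and pick for each $l$ a representative coordinate $j_l$ with $\sigma(j_l)=l$. Identifying variables according to $\sigma$ produces the minor $f'(y_1,\ldots,y_m):=f(y_{\sigma(1)},\ldots,y_{\sigma(n)})\in C$ of arity $m\le|A|^d\le k$, and re-expanding gives the $n$-ary operation $h(y_1,\ldots,y_n):=f'(y_{j_1},\ldots,y_{j_m})$. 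Since $f'$ has arity at most $k$ and lies in $C$, it lies in $D$, and hence so does its minor $h$. Because the entries of each $s_i$ depend only on $\sigma$, a direct check gives $h(s_i)=f(s_i)$ for all $i$, i.e. $h|_S=f|_S$.

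Combining these steps, every $f\in C$ satisfies the hypothesis of the interpolation lemma for the clone $D$, which contains the near-unanimity operation $g$; hence $f\in D$. As $f$ was arbitrary, $C\sse D\sse C$, so $C=D=\Clo(C^{(k)})$ with $k=\max(d+1,|A|^d)$, which is the desired statement. The points that need care in the write-up are the bookkeeping showing $h|_S=f|_S$ and the verification that a minor of a $\le k$-ary operation of $C$ indeed lies in $\Clo(C^{(k)})$; the genuinely nontrivial content, however, is the inductive gluing in the interpolation lemma.
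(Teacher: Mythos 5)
Your proof is correct, but there is nothing in the paper to compare it against: the paper states this theorem as a quoted result of Baker and Pixley and never proves it, using only its consequences (finiteness of $\sM^d_n$ and the existence of $\lambda(C)$). What you have written is essentially the standard proof of the Baker--Pixley interpolation theorem, and both halves check out. The gluing step in your interpolation lemma is sound: for distinct $p_1,\ldots,p_{d+1}\in B$ and any $b\in B$, at most one index $i$ satisfies $b=p_i$, so at least $d$ of the values $h_i(b)$ equal $f(b)$, and the near-unanimity identities force $g(h_1(b),\ldots,h_{d+1}(b))=f(b)$; since $A$ is finite, the induction terminates at $B=A^n$ and yields $f\in E$. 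The reduction to arity $|A|^d$ is also correct: coordinates carrying the same column $(s_1(j),\ldots,s_d(j))$ may be identified, the minor $f'$ lies in $C$ and has arity $m\le |A|^d\le k$, hence lies in $\Clo(C^{(k)})$ by the routine padding-and-identification fact you flag, and the identity $s_i(j_{\sigma(l)})=s_i(l)$ gives $h(s_i)=f(s_i)$ exactly as you claim; the same padding argument puts $g$ itself in $\Clo(C^{(k)})$, which legitimizes applying the lemma to $E=\Clo(C^{(k)})$ rather than to $C$. Two minor points for the write-up: the hypothesis of your lemma quantifies over all $S$ with $|S|\le d$, so you should also dispose of the case $|S|<d$ (list points with repetition, or embed $S$ in a $d$-element set); and your explicit bound $k=\max(d+1,|A|^d)=n^d$ is far from optimal---the sharp values quoted in the paper's introduction are of order $(n-1)^d-1$---but since the theorem only asserts the existence of some $k$, this costs nothing.
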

Therefore we can consider $\lambda(C) := \min \{ k\in \N \mid C = \Clo (C^{(k)}) \}$ describing the minimal $k$ such that the $k$-ary part of $C$ characterizes the entire clone. It is a consequence of the theorem that the set $\sM^d_n$ of all clones on $A:=\set{0, \ldots, n-1}$ containing a $(d+1)$-ary nu-operation $g$ is finite. Therefore we can define
\begin{displaymath}
 \lambda_d(n) := \max\{ \lambda(C) \mid C \in \sM^d_n\}.
\end{displaymath}
Knowledge about $\lambda_d(n)$ can be used to examine and characterize unknown clones or to calculate them more efficiently. Furthermore there is the special case where $g$ is conservative, that is for all $x_1, \ldots, x_{d+1} \in A$ we have  $g(x_1, \ldots, x_{d+1}) \in \{x_1, \ldots, x_{d+1}\}$. The subset of $\sM^d_n$ containing a conservative $g$ is termed $\sC^d_n$ here and the corresponding function is $\gamma_d(n):= \max\{ \lambda(C) \mid C \in \sC^d_n\}$. \\
Work on $\lambda_d$ and $\gamma_d$ started in 1989 by Harry Lakser. Here is an overview of known results:

\begin{itemize}
 \item \cite{laskerFinGenCoT}:  $\lambda_2(n) = n(n-2)$ for $n\geq 5$
 \item \cite{kerkhoffZhukDraft}: $\lambda_2(4) = 8$, $\lambda_2(3) = 5$, $\lambda_2(2) = 3$ 
 \item \cite{kerkhoffOnTGenoCcNuOp}: $\lambda_d(n) = (n-1)^d-1$ for $n\geq (d-1)2^d +d+1$
 \item \cite{kerkhoffOnTGenoCcNuOp}:  $\lambda_d(n) \geq (n-1)^d-1$ for $n\geq 3$
 \item \cite{kerkhoffOnTGenoMCcNuOp}: $\gamma_d(n) \leq dn^{d-1}$
\end{itemize}

Note that while $\lambda_d(n)$ is known to a great extent, $\gamma_d(n)$ remains to be discovered. The latter has been introduced by Kerkhoff in \cite{kerkhoffOnTGenoMCcNuOp}.\\
In Section \ref{sec_Boundgamma_2_n} we will prove a lower bound on $\gamma_2(n)$, giving sharp results. The paper concludes with a generalization of the ideas for $d\geq 3$.\\
Before we begin, we would like to thank Dr. Sebastian Kerkhoff for his great support regarding this paper and the anonymous reviewer for his/her efforts and the helpful report.

\section{Preliminaries}

%In the entire paper we will implicitly assume that $d,i,j,k,n \in \N$.\\
Let be $O_A$ the set of all finitary operations on set $A$. For $F\sse O_A$ and $k\in \N$ let $F^{(k)}$ be the set of all $k$-ary operations in $F$.  $F\sse O_A$ is called \emph{clone on $A$} if it includes all projection maps and is closed under composition, that is
\begin{displaymath}
\forall k, k'\in \N\, \forall f\in F^{(k)}, g_1, \ldots, g_k \in F^{(k')}\colon f(g_1, \ldots, g_k) \in F 
\end{displaymath}
where $f(g_1, \ldots, g_k)(x_1, \ldots, x_{k'}):=f(g_1(x_1,\ldots, x_{k'}), \ldots, g_k(x_1, \ldots, x_{k'}))$.\\
Since $O_A$ is a clone and intersections of clones are clones, we can define $\Clo(F)$ to be the smallest clone that contains $F$. We say that $F$ \emph{generates} $\Clo(F)$. Since clones on sets with one element are trivial, we will from now on consider $|A| \geq 2$ only. \\

A $l$-ary relation is an element of $\fP(A^l)$. We describe relations as matrices by interpreting their columns as elements of the relation. The union of matrices will therefore describe the set of all columns in the matrices, e.g. 
\begin{displaymath}
  \left( \begin{array}{ccc}
          1 & 2 & 2 \\
	  1 & 1 & 2
         \end{array} \right) \cup
         \left( \begin{array}{cc}
          2 & 2 \\
	  0 & 1
         \end{array} \right) = \left\{ \left( \begin{array}{c} 1 \\ 1 \end{array} \right), \left( \begin{array}{c} 2 \\ 0 \end{array} \right), \left( \begin{array}{c} 2 \\ 1 \end{array} \right), \left( \begin{array}{c} 2 \\ 2 \end{array} \right) \right\} .
\end{displaymath}

Note the ambiguity of this notation when it comes to a single column vector. On the one hand it can represent a relation with one element and on the other hand it can describe an element of a relation. We promise that the context will always clarify what is meant.\\
Whenever we have a fixed relation $\sigma$ and need to consider some matrix representation, we will choose the matrix $\Sigma$ containing the elements of $\sigma$ in lexical order\footnote{Any linear order is fine - we just need to fix one.}. With that we know what the \emph{$i$-th row of $\sigma$} means, namely $\Sigma_{(i,\cdot)}$\;.\\
For a relation $\rho$ of arity $l$ and an $d$-ary operation $f$ define
\begin{displaymath}
f(\rho):= \left\{\left. \left(\hspace*{-3pt} \begin{array}{c}
        f(r_1(1),\ldots , r_d(1)) \\
        \vdots \\
        f(r_1(l),\ldots , r_d(l))
          \end{array}\hspace*{-3pt} \right)  \;\right| \;
          \left(\hspace*{-3pt} \begin{array}{c}  r_1(1)\\         \vdots \\         r_1(l) \end{array} \hspace*{-3pt}\right), \hdots,  
          \left(\hspace*{-3pt} \begin{array}{c}  r_d(1)\\         \vdots \\         r_d(l) \end{array}\hspace*{-3pt} \right)
          \in \rho \right\}.
\end{displaymath}
Thinking of matrices, $xI^{(l)}$ (for $x\in A$) will denote the $l\times l$ matrix with $x$ on the diagonal and all other entries zero, which describes the relation
\begin{displaymath}
\left\{ \left( \begin{array}{c}x\\ 0\\0\\\vdots \end{array}\right), \left( \begin{array}{c}0\\x\\ 0\\ \vdots \end{array}\right), \left( \begin{array}{c}0\\0\\x\\ \vdots \end{array}\right), \ldots, \left( \begin{array}{c}0\\ \vdots \\ 0\\x \end{array}\right) \right\}.
\end{displaymath}

The $i$-th variable of an operation $f\in O_A^{(d)}$ is called \emph{essential} if there exist $x_1,\ldots, x_d, x_i'\in A$ such that $f(x_1, \ldots, x_i, \ldots, x_d) \neq f(x_1, \ldots, x_i', \ldots, x_d)$. We can now count the number of essential variables for an operation $f$ and denote the number by $\mathrm{ess}(f)$.\\
$f\in O_A$ is said to \emph{preserve} a relation $\rho$ if $f(\rho) \sse \rho$. The statement ``$f$ preserves $\rho$'' will be written as $f\preserves \rho$ and its converse as $f\npreserves \rho$.

%%%%%%%%%%%%%%%%%%%%%%%%%%%%%%%%%%%%%%%%%%%%%%%%%%%%%%%%%%%%%%%%%%%%%%%%%%%%%%%  
\section{Lower bound for arity three}\label{sec_Boundgamma_2_n}
To prove a lower bound for $\gamma_2(n)$, as well as for Theorem \ref{thm:BoundGamma_d_n}, we will need the following Lemma.

\begin{lemma}\label{lm:essTool}
  Let $k$, $d$ and $n$ be positive integers, and let $A$ be an $n$-element set. Assume there exist relations $\sigma$ and $\rho$ on $A$ and $f,g\in O_A$ such that the following conditions hold:
  \begin{enumerate}[label=(\arabic*)]
  \item $|\sigma| = k$, $\sigma \sse \rho$,
  \item $f \npreserves \rho$ but $f\preserves \rho \setminus \set{t}$ for all $t\in \sigma$,
  \item $g$ is a $(d+1)$-ary conservative near-unanimity operation and $g \preserves \rho \setminus \set{t}$ for all $t\in \sigma$.
  \end{enumerate}
Then $\gamma_d(n) \geq k$.
\end{lemma}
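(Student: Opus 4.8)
The plan is to exhibit a single clone $C \in \sC^d_n$ for which $\lambda(C) \geq k$; since $\gamma_d(n)$ is the maximum of $\lambda$ over all clones in $\sC^d_n$, this immediately gives the desired bound. The natural candidate is the clone of all operations preserving each of the $k$ ``punctured'' relations, namely
\[
 C := \set{ h \in O_A \mid h \preserves \rho \setminus \set{t} \text{ for all } t \in \sigma }.
\]
This really is a clone, being an intersection over $t\in\sigma$ of the clones $\set{h \mid h \preserves \rho\setminus\set{t}}$ (the set of operations preserving a fixed relation contains all projections and is closed under composition). By hypothesis $(3)$ the conservative $(d+1)$-ary near-unanimity operation $g$ lies in $C$, so $C \in \sC^d_n$ and, by the Baker--Pixley theorem, $\lambda(C)$ is a well-defined finite integer. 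By hypothesis $(2)$ we also have $f \in C$.

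First I would reduce the goal $\lambda(C) \geq k$ to the single statement $f \notin \Clo(C^{(k-1)})$. This reduction is immediate: since the clones $\Clo(C^{(j)})$ increase with $j$ and are all contained in $C$, the witness $f \in C \setminus \Clo(C^{(k-1)})$ forces $C \neq \Clo(C^{(j)})$ for every $j \leq k-1$, hence $\lambda(C) \geq k$. The heart of the matter is then the claim that \emph{every operation in $C$ of arity at most $k-1$ preserves $\rho$}. Indeed, let $h \in C$ have arity $j \leq k-1$ and let $c_1,\ldots,c_j \in \rho$ be any columns. Because $j \leq k-1 < k = |\sigma|$, the columns $c_1,\ldots,c_j$ cannot exhaust $\sigma$, so by the pigeonhole principle there is some $t \in \sigma$ with $t \notin \set{c_1,\ldots,c_j}$. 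Then $c_1,\ldots,c_j \in \rho \setminus \set{t}$, and since $h \preserves \rho \setminus \set{t}$ we conclude $h(c_1,\ldots,c_j) \in \rho \setminus \set{t} \sse \rho$. Thus $C^{(k-1)} \sse \set{h \in O_A \mid h \preserves \rho}$.

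To finish, I would invoke that $\set{h \mid h \preserves \rho}$ is itself a clone containing $C^{(k-1)}$; being a clone, it contains the clone generated by $C^{(k-1)}$, so $\Clo(C^{(k-1)}) \sse \set{h \mid h \preserves \rho}$. In particular every operation of $\Clo(C^{(k-1)})$ preserves $\rho$, whereas hypothesis $(2)$ gives $f \npreserves \rho$; therefore $f \notin \Clo(C^{(k-1)})$ and we are done, obtaining $\gamma_d(n) \geq \lambda(C) \geq k$. The one subtlety worth flagging is the interplay of arities: the generators in $C^{(k-1)}$ preserve $\rho$ only for the pigeonhole reason, and their compositions can have arbitrarily large arity, so one cannot directly re-run the pigeonhole count on a general element of $\Clo(C^{(k-1)})$. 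What rescues the argument is exactly that preservation of the single relation $\rho$ is a clone property and so is automatically inherited under composition; this is also precisely why we need $f$ to fail only on $\rho$ while still preserving every punctured relation $\rho \setminus \set{t}$, placing it in $C$ but outside $\Clo(C^{(k-1)})$.
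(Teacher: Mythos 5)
Your proof is correct, and its engine is the same as the paper's: the pigeonhole argument that an operation of arity at most $k-1$, applied to tuples from $\rho$, must miss some $t\in\sigma$ and hence lands in $\rho\setminus\set{t}\sse\rho$, combined with the fact that preservation of a relation passes from a generating set to the whole generated clone, with $f$ as the member violating $\rho$. The genuine difference is the choice of witness clone. The paper takes the smallest possible one, $\Clo(\set{f,g})$, and therefore invokes the inheritance principle twice: once to see that the $(k-1)$-ary members of $\Clo(\set{f,g})$ preserve each $\rho\setminus\set{t}$ (inherited from the generators $f$ and $g$), and once more, implicitly, to see that a clone generated by operations preserving $\rho$ cannot contain $f$. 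You instead take the largest admissible witness, $C=\bigcap_{t\in\sigma}\set{h\in O_A \mid h\preserves \rho\setminus\set{t}}$, for which the first of those two steps holds by definition; the price is the (easy) observations that $C$ is a clone, that $g\in C$ places $C$ in $\sC^d_n$, and the appeal to Baker--Pixley so that $\lambda(C)$ is well defined. Both routes are sound; yours separates the combinatorial pigeonhole (a statement about $C^{(k-1)}$ alone) more cleanly from the clone-theoretic inheritance (applied only to the single relation $\rho$), and your closing remark about why the pigeonhole cannot be rerun on high-arity compositions is exactly the right subtlety to flag. Since $\gamma_d(n)$ is a maximum over all of $\sC^d_n$, proving $\lambda(C)\geq k$ for your larger clone $C$ yields the lemma just as the paper's choice does.
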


\begin{proof}
  Assume that $h\in \Clo(\set{f,g})^{(k-1)}$ and let $r_1, \ldots, r_{k-1} \in \rho$. Since $|\sigma|=k$ there exists $t\in \sigma$ which is different from all $r_1, \ldots, r_{k-1}$. By condition (2) and (3) $f$ and $g$ preserve $\rho\setminus \set{t}$. But then $h$ must preserve $\rho \setminus \set{t}$ since every member of a clone preserves all relations its generators preserve. Therefore $h(r_1, \ldots, r_{k-1}) \in \rho \setminus \set{t} \sse \rho$ for arbitrary $r_1, \ldots, r_{k-1} \in \rho$, which means $h\preserves \rho$.\\
Hence $\Clo(\set{f,g})$ cannot be generated by its $k-1$-ary part, since all elements of $\Clo(\set{f,g})^{(k-1)}$ preserve $\rho$ but $f$ does not. Since $\Clo(\set{f,g})$ is a clone containing a $(d+1)$-ary conservative nu-operation, we have $\gamma_d(n) \geq k$.
\end{proof}

Using this lemma we can prove the first main result:
\begin{theorem} \label{lem_Boundgamma_2_n}
 $\gamma_2(n) \geq 2n $ for $n\geq 4$.
\end{theorem}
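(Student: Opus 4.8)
The plan is to apply Lemma~\ref{lm:essTool} with $d=2$, so I need to construct, for each $n\geq 4$, relations $\sigma\sse\rho$ on $A$ with $|\sigma|=2n$, an operation $f$ with $f\npreserves\rho$ but $f\preserves\rho\setminus\set{t}$ for every $t\in\sigma$, and a ternary conservative near-unanimity operation (i.e.\ a conservative majority operation) $g$ with $g\preserves\rho\setminus\set{t}$ for every $t\in\sigma$. The lemma then immediately yields $\gamma_2(n)\geq 2n$. The whole problem is thus one of \emph{design}: I must exhibit explicit $\rho$, $\sigma$, $f$, and $g$ meeting these three conditions, and the arity of $\rho$ is a free parameter I get to choose.

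My first step is to pick the arity of $\rho$ and the "removable" columns $\sigma$. Since I want $|\sigma|=2n$ and the bound should grow linearly in $n$, the natural guess is to use a binary relation $\rho\sse A^2$ and let $\sigma$ consist of $2n$ carefully chosen pairs. A promising concrete choice is to take $\rho$ to be something like the graph of a cyclic shift together with its "shifted" copies, and to let $\sigma$ be a set of $2n$ columns arranged so that each single column, when deleted, kills the one witness that makes $f$ fail to preserve $\rho$. Concretely I would look for $f$ (likely a binary operation) for which there is exactly one "bad" input tuple producing a column outside $\rho$, and I would arrange $\sigma$ so that for each $t\in\sigma$, removing $t$ from $\rho$ destroys either the input or the output of that single bad computation; the matrix representation in the preliminaries (union of columns) is exactly the bookkeeping device for this. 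The counting $|\sigma|=2n$ suggests $\sigma$ should pair up each element $a\in A$ with two associated columns, giving the factor $2$.

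The main obstacle, and the step I expect to consume most of the work, is condition~(3): verifying that a single fixed conservative majority operation $g$ preserves \emph{all} $2n$ of the relations $\rho\setminus\set{t}$ simultaneously. A conservative majority operation on $A$ must, by a theorem of the field, essentially act coordinatewise according to a choice of majority operation on each two-element subset of $A$, and preserving a binary relation imposes rigid constraints linking these choices across pairs. The delicate part is that $g$ must preserve $\rho\setminus\set{t}$ for every $t\in\sigma$ but need \emph{not} preserve $\rho$ itself; indeed it typically will not, since otherwise $\Clo(\set{f,g})$ would preserve $\rho$ and the argument would collapse. So I must tune $g$ so that the only $3$-tuples of columns of $\rho$ on which $g$ could produce the forbidden column $t$ always involve $t$ itself as one of the three inputs, making that computation vanish once $t$ is removed. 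Balancing this against condition~(2) for $f$ over all $2n$ choices of $t$ is the crux.

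Once the design is fixed, the verification splits into three routine-but-careful checks: (1) that $\sigma\sse\rho$ and $|\sigma|=2n$, which is just counting the listed columns; (2) that $f$ fails on exactly the tuple(s) whose output or inputs lie in $\sigma$, so that deleting any $t\in\sigma$ restores preservation—this is a finite case check on the few nontrivial input combinations of $f$; and (3) the majority-preservation check for $g$, which I would organize by considering, for each candidate output column, the three input columns of $\rho\setminus\set{t}$ that majority-combine to it and confirming the result stays inside $\rho\setminus\set{t}$. Because everything is explicit and finite for fixed structure, these checks go through uniformly in $n$ once the pattern is right; the genuine difficulty lies entirely in finding the pattern, i.e.\ in the simultaneous satisfaction of (2) and (3), after which Lemma~\ref{lm:essTool} finishes the proof.
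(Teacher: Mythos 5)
Your proposal correctly identifies the reduction to Lemma~\ref{lm:essTool} with $d=2$ and $k=2n$, but that reduction is the trivial part; the entire content of the theorem lies in exhibiting explicit $\sigma$, $\rho$, $f$, $g$ and verifying conditions (1)--(3). You leave all of that as a design problem that you describe but do not solve: ``a promising concrete choice'', ``I would look for'', ``I must tune $g$'' produce no verifiable objects, no explicit relation, no explicit operations, and no verification. As written, there is no proof to check.

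Moreover, the two concrete hints you do commit to would fail. First, $f$ cannot be binary, nor of any arity below $k=2n$: if $f$ had arity $m\le k-1$, then for arbitrary $r_1,\ldots,r_m\in\rho$ condition (1) supplies some $t\in\sigma$ different from all of them, and condition (2) would force $f(r_1,\ldots,r_m)\in\rho\setminus\set{t}\sse\rho$, i.e.\ $f\preserves\rho$, contradicting (2). So the hypotheses of the lemma themselves force the arity of $f$ to be at least $k$; this is exactly why the paper takes $f$ to be $2n$-ary, returning $n-1$ precisely when its argument tuple equals one of the two rows of the $2\times 2n$ matrix of $\sigma:=1I^{(2)}\cup\cdots\cup(n-1)I^{(2)}\cup\set{\binom{2}{1},\binom{1}{2}}$ (with $\rho:=\sigma\cup\set{\binom{0}{0}}$), and $0$ otherwise, so that deleting any single column of $\sigma$ destroys every way of producing $\binom{n-1}{n-1}\notin\rho$. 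Second, your remark that $g$ ``typically will not'' preserve $\rho$, ``since otherwise $\Clo(\set{f,g})$ would preserve $\rho$ and the argument would collapse'', is backwards: the clone fails to preserve $\rho$ as soon as $f$ does, regardless of what $g$ preserves, and in the paper's construction $g$ does preserve $\rho$ (proving $g(\rho)\sse\rho$ is the first step of checking condition (3)). So the design constraints you set for yourself are partly unsatisfiable (binary $f$) and partly misdirected ($g$ violating $\rho$), and the crux of the theorem --- explicit objects simultaneously satisfying (2) and (3) --- is missing.
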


\begin{proof}
For given $n\geq 4$, we set $d=2$ and $k=2n$ and construct relations $\sigma, \rho$ and operations $f, g$ such that conditions (1) to (3) of Lemma \ref{lm:essTool} are satisfied.

Consider the following:
\begin{align*}
\sigma &:= \bigcup \left\{ 1I^{(2)}, \ldots, (n-1)I^{(2)} \right\} \cup \left( \begin{array}{cc} 
2 & 1 \\
1 & 2
\end{array} \right), \\  %%%%%%%%%%%
\rho   &:= \sigma \cup \left( \begin{array}{cc}
0 \\
0  \end{array} \right), \\ %%%%%%%%%%%
f(x_1, \ldots, x_{2n}) &:= \left\{ \begin{array}{ll}
n-1 & \text{if } (x_1, \ldots, x_{2n}) \text{ equals the first row of } \sigma ,\\
n-1 & \text{if } (x_1, \ldots, x_{2n}) \text{ equals the second row of }\sigma , \\
0   & \text{otherwise},  \end{array} \right.  \\ %%%%%%%%%%%
g(x_1,x_2, x_3) &:= \left\{ \begin{array}{ll}
                     \Maj(x_1, x_2, x_3) & \text{if } (x_1,x_2, x_3) \text{ is near-unanimous},  \\
		     g^*(x_1,x_2, x_3) & \text{otherwise},
                     \end{array} \right.\\
g^*(x_1,x_2, x_3) &:= \left\{ \begin{array}{ll}
                     0 & \text{if } 0 \in \{x_1,x_2, x_3\}, \\
                     \max\{x_1,x_2, x_3\} & \text{otherwise.}
                     \end{array} \right.                      
\end{align*}
The following visualization of $\sigma$ and $\rho$ (to be read like an incidence matrix) should facilitate the understanding:
\begin{center}
 \begin{tikzpicture}[scale=0.9]
%draw the grid
\draw[step=1,color=gray] (0,0) grid (7,7);

%draw the generic entries
\foreach \i in {2,...,5,7}
{
  \draw (\i-0.5,6.5) node {$\sigma, \rho$};
}
\foreach \i in {5,...,2,0}
{
  \draw (0.5,\i+0.5) node {$\sigma, \rho$};
}

%put the dots and last entries
  \draw (6-0.5,6.5) node {$\ldots$};
  \draw (0.5,2-0.5) node {$\vdots$};
  
%put rho
  \draw (0.5,6.5) node {$\rho$};
  
%put the two special entries
  \draw (1.5,4.5) node {$\sigma, \rho$};
  \draw (2.5,5.5) node {$\sigma, \rho$};
  
%  draw the labels outside
\foreach \i in {0,...,4}
{
  \draw (\i+0.5,-0.5) node {\i};
  \draw (-0.5,6.5-\i) node {\i};
}
  \draw (5+0.5,-0.5) node {$\ldots$};
  \draw (6+0.5,-0.5) node {$n-1$};
  
  \draw (-0.5,6.5-5) node {$\vdots$};
  \draw (-0.5,6.5-6) node {$n-1$};

\end{tikzpicture}
\end{center}
Now we start checking conditions (1) to (3), where (1) is trivial.\\

(2): Since $f(\rho) \ni  \bigl(\begin{smallmatrix}  n-1\\ n-1 \end{smallmatrix} \bigr)  \not \in \rho$ we have $f \npreserves \rho$. \\
Assume we have $r_1, \ldots, r_{2n} \in \rho \setminus \{ t \}$ for some $t \in \sigma$. Then we can never get $\binom{n-1}{n-1}=f(r_1, \ldots, r_{2n})$ because we would either need all entries from the diagonal (that is $\binom{0}{0}, \binom{1}{1},\binom{2}{2}$) in order to use the first (or second) case in $f$ only, or all elements of $\sigma$ to use the first and the second case in $f$. Neither is possible.\\
Therefore $f(\rho\setminus \{t\}) \sse \bigl(\begin{smallmatrix}  0 & n-1 & 0\\ 0 & 0 & n-1 \end{smallmatrix} \bigr)$ for any $t\in \sigma$. \\
If $t = \binom{n-1}{0}$ we have $\binom{n-1}{0} \not \in f(\rho\setminus \{t\})$ since the inclusion would require some element with $n-1$ in the first row, but there is only $t$ with that property. The analogue is true for $t = \binom{0}{n-1}$. Therefore $t\not\in f(\rho\setminus\set{t})$ for any $t\in\sigma$, which implies (2).\\

(3): Conservativity, near-unanimity and arity of $g$ are straightforward to check. It remains the show that  for all $t \in \sigma$ we have $g \preserves \rho \setminus \{t\}$ and we begin by proving $g(\rho) \sse \rho$.\\
Observe that 1 (and 2) can only be a result from the nu-part of $g$ because three non-near-unanimous, non-zero arguments must have a maximum greater than 2.\\
Assume we have $g(r_1, r_2, r_3)$ equaling $\binom{1}{1}$ (or $\binom{2}{2}$). Then we would need two arguments to be 1 (or two being 2) in the first and second line, which implies to use $\binom{1}{1}$ (or $\binom{2}{2}$) to get the result $\binom{1}{1}$ (or $\binom{2}{2}$). Hence $\binom{1}{1}$, $\binom{2}{2} \not \in g(\rho)$.\\ 
Let be $a,b\in \N, a\geq 3, b\neq 0$ and assume we have $r_1, r_2, r_3 \in \rho$ such that $\binom{a}{b} = g(r_1, r_2, r_3)$. Then we must have some $r_i$ such that $r_i = \binom{a}{0}$ because it is the only element with $a$ in the first coordinate. Without loss of generality we will choose $i=1$. But then $a$ needs to be a result of $g^*$ since $b=0$ otherwise. Hence, $a$ is the greatest integer in the first row and all arguments there are pairwise distinct and nonzero. For example
\begin{center}
$\begin{array}{rclcccl}
 a &=& g^*(&a, &a-1, & a-2 &), \\
 b &=& g(  &0, &?,  & ?    &).
\end{array}$ 
\end{center}
Now observe that there are at most two different elements in $\rho$ with $b$ in the second coordinate, one of which always has 0 in the first coordinate. Therefore $b$ must be a result of $g^*$ as well, because otherwise we would need one entry to be 0 in the first row. But then $b=0$ because there is a zero below $a$. Contradiction. Therefore $a\geq 3$ implies $b=0$.\\
$ $\\
Because $\sigma$ and $\rho$ are symmetric relations, the analogue is true if we swap the restrictions for $a$ and $b$. Hence if $\binom{a}{b} \in g(\rho)$ we have $\binom{a}{b} \in \rho$, i.e. $g(\rho) \sse \rho$.\\ 

It remains to show that for all $t \in \sigma$ we have $t \not \in g(\rho \setminus \set{t})$. For reasons of symmetry of $\rho$, the following cases are sufficient: 

\begin{itemize}
\item $t=\binom{0}{1}$: As argued above, 1 can only be produced through the nu-part of $g$. Notice that the only elements of $\rho$ with 1 in the second coordinate are $\binom{0}{1}$ and $\binom{2}{1}$. Therefore if $\binom{?}{1} = g(r_1, r_2, r_3)$ with $r_1, r_2, r_3 \in \rho \setminus \set{t}$, there must be two arguments having  2 in the first coordinate. But then the result of $g$ will be $\binom{2}{1} \neq t$, yielding  $\binom{0}{1} \not \in g(\rho \setminus \set{\binom{0}{1}})$.
\item $t=\binom{0}{2}$: Swapping 1 and 2 in the above case gives $\binom{0}{2} \not \in g(\rho \setminus \set{\binom{0}{2}})$.
\item $t=\binom{1}{2}$: Again, 1 and 2 can only be produced by the nu-part of $g$. But if $r_1, r_2, r_3 \in \rho \setminus \set{t}$ and $\binom{1}{2} = g(r_1, r_2, r_3)$ then there need to be two arguments with 1 in the first coordinate and two with 2 in the second coordinate implying that there exists $r_i = \binom{1}{2}$ for some $i\in \set{1,2,3}$. Contradiction. Therefore $\binom{1}{2} \not \in g(\rho \setminus \set{\binom{1}{2}})$.
\end{itemize}
Hence $g\preserves \rho\setminus \set{t}$ for all $t\in \sigma$.\\
Since our construction works for all $n\geq 4$ and Lemma \ref{lm:essTool} is applicable, we conclude $\gamma_2(n) \geq 2n$ for $n\geq 4$.
\end{proof}

The framework used above is a condensed version of some of the ideas used in \cite{kerkhoffZhukDraft}. Combined with the results from \cite{kerkhoffOnTGenoMCcNuOp}, Theorem \ref{lem_Boundgamma_2_n} immediately yields the following corollary.

\begin{corollary}
  $\gamma_2(n) = 2n$ for all $n\geq 4$.
\end{corollary}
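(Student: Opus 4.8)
The plan is to combine the lower bound just established with the upper bound from the literature; the two meet exactly, so the corollary is a sandwich argument requiring essentially no new work. First I would recall that the overview of known results in the introduction cites \cite{kerkhoffOnTGenoMCcNuOp} for the general upper bound $\gamma_d(n) \leq dn^{d-1}$, valid for all admissible $d$ and $n$. Specializing this to $d=2$ immediately yields $\gamma_2(n) \leq 2n^{2-1} = 2n$.

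Next I would invoke Theorem \ref{lem_Boundgamma_2_n}, proved above, which gives the matching lower bound $\gamma_2(n) \geq 2n$ for all $n \geq 4$. Since both inequalities hold simultaneously precisely on the range $n \geq 4$, I would conclude
\begin{displaymath}
2n \leq \gamma_2(n) \leq 2n,
\end{displaymath}
hence $\gamma_2(n) = 2n$ for all $n \geq 4$, as claimed.

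There is no genuine obstacle here: the entire content of the corollary lies in the construction carried out in Theorem \ref{lem_Boundgamma_2_n}, which supplies a clone in $\sC^2_n$ whose $(2n-1)$-ary part fails to generate it. The only point worth checking is that the two cited bounds share overlapping hypotheses on $n$, which they do for $n \geq 4$; outside this range the lower bound is not asserted, so the equality is stated only for $n \geq 4$. Thus the proof amounts to one sentence of bookkeeping once the upper bound of \cite{kerkhoffOnTGenoMCcNuOp} is quoted.
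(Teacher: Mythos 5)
Your proposal is correct and is exactly the paper's argument: the paper derives the corollary by combining Theorem \ref{lem_Boundgamma_2_n} with the upper bound $\gamma_d(n) \leq dn^{d-1}$ from \cite{kerkhoffOnTGenoMCcNuOp}, which at $d=2$ gives $\gamma_2(n) \leq 2n$. Nothing is missing; the sandwich $2n \leq \gamma_2(n) \leq 2n$ for $n \geq 4$ is all there is to it.
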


Lastly we have $\gamma_2(2)=\lambda_2(2)$ and $\gamma_2(3) = \lambda_2(3)$ because every nu-operation on a set with three or less elements is necessarily conservative.

\section{Generalization for higher arities}
Unfortunately, we were not able to generalize our sharp lower bound to higher arities $d$ in such a way that they remain at least 'potentially sharp'. \\

However, the following theorem is a first result for $d\geq 3$ using the same techniques as above:
\begin{theorem} \label{thm:BoundGamma_d_n}
 $\gamma_d(n) \geq d(n-2)$ for $n\geq 3, d\geq 3$.
\end{theorem}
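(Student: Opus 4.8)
The plan is to apply Lemma~\ref{lm:essTool} with $k=d(n-2)$, reproducing the strategy of Theorem~\ref{lem_Boundgamma_2_n} one dimension higher: I would work with $d$-ary relations in place of binary ones, since $g$ must now be $(d+1)$-ary. For the relation I would generalise the ``diagonal'' part of $\sigma$ by using scaled unit tuples: for a suitable block of $n-2$ colours $i$ and each coordinate $j\in\set{1,\ldots,d}$ take the tuple $i\,e_j$ having $i$ in coordinate $j$ and $0$ elsewhere, so that $|\sigma|=d(n-2)$, and set $\rho:=\sigma\cup\set{(0,\ldots,0)}$. For $g$ I would take the $(d+1)$-ary conservative near-unanimity operation that returns $\Maj$ on near-unanimous tuples and otherwise returns $0$ whenever $0$ is among its arguments and the maximum of its arguments when it is not; conservativity, near-unanimity and the arity are then immediate. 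For $f$ I would use the $d(n-2)$-ary operation outputting a distinguished colour $v$ precisely when its argument tuple coincides with one of the $d$ rows of the matrix $\Sigma$, and $0$ otherwise. Condition~(1) is trivial.

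The mechanism behind condition~(2) is the counting fact inherited from the $d=2$ case: applying $f$ to columns $r_1,\ldots,r_{d(n-2)}\in\rho$ row by row can only yield the constant tuple $(v,\ldots,v)$ if every one of the $d$ output rows matches a row of $\Sigma$, and a short argument on which columns carry a given colour in a given coordinate shows that this forces the columns to exhaust $\sigma$ (with the row-matching encoded by a permutation of $\set{1,\ldots,d}$). Deleting any single $t\in\sigma$ therefore destroys the only witness for the forbidden tuple, which gives $f\npreserves\rho$ immediately; establishing the converse $f\preserves\rho\setminus\set{t}$ additionally requires that \emph{no other} tuple outside $\rho$ be produced, a point I return to below. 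Condition~(3) itself is clean: a row of the output of $g$ can be nonzero only if at least $d$ of the $d+1$ arguments are nonzero in that coordinate, and since $2d>d+1$ for $d\ge 2$ this happens in at most one coordinate, so $g(\rho)$ consists only of scaled unit tuples and $(0,\ldots,0)$, all lying in $\rho$. Producing a fixed $t=a\,e_{j}$ would need either $d$ copies of the value $a$ in coordinate $j$ (through the $\Maj$-part) or $a$ as a maximum (through the non-near-unanimous part), and in both cases one argument must equal $t$ itself, whence $t\notin g(\rho\setminus\set{t})$ and $g\preserves\rho\setminus\set{t}$.

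The main obstacle is exactly the place where the passage from $d=2$ to $d\ge 3$ stops being cosmetic. When $d=2$ the only multi-coordinate tuple $f$ can manufacture is the forbidden constant one, so the converse half of condition~(2) is automatic; for $d\ge 3$, however, $f$ can also produce ``intermediate'' tuples carrying the colour $v$ in two or more — but not all — coordinates, and such tuples are not scaled unit tuples and hence lie outside $\rho$. Deleting a single column of $\sigma$ does not in general block these partial matches, so the naive diagonal construction violates $f\preserves\rho\setminus\set{t}$. Overcoming this is what I expect to be the real work: one must tune the relation and the operation $f$ so that partial row-matches become impossible — for instance by reserving extra colours, or by arranging the scaled unit tuples of $\sigma$ in an asymmetric ``staircase'' that makes the rows of $\Sigma$ pairwise incompatible across distinct coordinates. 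It is precisely this suppression of intermediate tuples that costs colours and explains why the resulting bound $d(n-2)$ falls short of the sharp value $2n$ recovered at $d=2$; re-running the bookkeeping of condition~(3) through the adjusted construction is then the only genuinely laborious step.
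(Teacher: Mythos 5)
You have set up the right framework (Lemma \ref{lm:essTool} with $k=d(n-2)$, the same $\sigma$ of scaled unit tuples, the same conservative nu-operation $g$, and essentially the same $f$), and you have correctly diagnosed the obstacle: with $\rho:=\sigma\cup\set{(0,\ldots,0)}$, the operation $f$ can output tuples carrying the distinguished colour in some but not all coordinates, these lie outside $\rho$, and deleting a single $t\in\sigma$ does not block them, so condition (2) of the Lemma fails. But the proposal stops exactly there: the ``real work'' you defer (reserving extra colours, or an asymmetric staircase arrangement of $\sigma$) is never carried out, and neither of those suggestions is what actually repairs the construction. As it stands, this is a proof plan with the decisive step missing, not a proof.

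The paper's fix goes in the opposite direction from what you propose: rather than suppressing the intermediate tuples, it makes them legal. Take $v=n-1$ (this is why $\sigma$ only uses the colours $1,\ldots,n-2$) and enlarge the relation to $\rho:=\sigma\cup\set{0,n-1}^d\setminus\set{n-1}^d$. Since $f$ outputs only values in $\set{0,n-1}$, every tuple $f$ can produce except $(n-1,\ldots,n-1)$ now lies in $\rho$, so condition (2) reduces to the counting argument you already have: removing the unique $t\in\sigma$ with value $a$ in coordinate $i$ makes it impossible for $f$ to output $n-1$ in coordinate $i$, hence $\set{n-1}^d\not\sse f(\rho\setminus\set{t})$; and no other output can leave $\rho\setminus\set{t}$, because $t$ itself has a coordinate in $\set{1,\ldots,n-2}$ and so is never a value of $f$. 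The price is that condition (3) --- which in your small-$\rho$ setting is easy, as you observe --- becomes the genuinely laborious step: one must check that $g$ preserves the enlarged $\rho\setminus\set{t}$, ruling out outputs in $\set{n-1}^d$, outputs mixing $n-1$ with values from $\set{1,\ldots,n-2}$, and outputs with two coordinates in $\set{1,\ldots,n-2}$. This works because every element of $\rho$ contains a zero (so among any $d+1$ arguments some row has two zeros, which $g$ maps to $0$) and because $g^*$ returns $0$ as soon as a zero appears among its arguments. So your instinct that the bound $d(n-2)$ ``costs colours'' is right --- the two colours $0$ and $n-1$ are reserved for the cube --- but the mechanism needed is the enlargement of $\rho$, not a rigidified $\sigma$ or additional reserved colours.
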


\begin{proof}
 The proof is very similar to the proof of Theorem \ref{lem_Boundgamma_2_n} and we will use Lemma \ref{lm:essTool} again.\\
 For given $n$ and $d$ define:
 \begin{align*}
 k &:=d(n-2),\\
 \sigma &:= \bigcup \left\{ 1I^{(d)}, \ldots, (n-2)I^{(d)} \right\},  \\ 
\rho   &:= \sigma \cup \{0,n-1\}^d \setminus \{n-1\}^d,  \\ 
f(x_1, \ldots, x_{k}) &:= \left\{ \begin{array}{ll}
n-1 & \text{if } \exists i \in \set{1, \ldots, d}\colon \\
 & 			\quad (x_1, \ldots, x_{k}) \text{ equals the } i\text{-th row of } \sigma, \\
0   & \text{otherwise},  \end{array} \right.  \\ 
g(x_1, \ldots, x_{d+1}) &:= \left\{ \begin{array}{ll}
                     \Maj(x_1, \ldots, x_{d+1}) & \text{if } (x_1, \ldots, x_{d+1}) \\
						  & \quad \text{is near-unanimous},  \\
		     g^*(x_1,\ldots, x_{d+1}) & \text{otherwise},
                     \end{array} \right.\\
g^*(x_1,\ldots, x_{d+1}) &:= \left\{ \begin{array}{ll}
                     0 & \text{if } 0 \in \{x_1,\ldots, x_{d+1}\}, \\
                     \max\{x_1,\ldots, x_{d+1}\} & \text{otherwise.}
                     \end{array} \right.                      
\end{align*}
We will now check, that these relation and operation satisfy condition (1) to (3) from Lemma \ref{lm:essTool}, where (1) is trivial again.\\

(2): First of all we have $f(\rho) \supseteq \{n-1\}^d \not\sse \rho$ which means $f \npreserves \rho$. Now choose $t \in \sigma$ and observe that there is $a\in \{1, \ldots, n-2\}$ and $i \in \{1, \ldots, d\}$ such that $t$ is the unique element of $\sigma$ having $a$ in its $i$-th coordinate. Then $f(\rho \setminus \{t \})$ cannot contain any element with $n-1$ in the $i$-th coordinate because we need all numbers $1, \ldots, n-2$ to appear in the argument if we want to produce $n-1$ with $f$. Therefore $\{n-1\}^d \not \sse  f(\rho \setminus \{t \})$. Hence for all $t \in \sigma$ it holds that $f\preserves \rho \setminus \{t \}$ because any tuple consisting of 0 and $n-1$ except for the one in $\set{n-1}^d$ is in $\rho$. \\

(3): Again conservativity, near-unanimity and arity of $g$ are easy to check. The rest can be split into the following steps:
\begin{enumerate}[label*=3.\arabic*]
 \item $g(\rho) \sse \rho$
 \begin{enumerate}[label*=.\arabic*]
  \item $\set{n-1}^d \not \sse g(\rho)$
  \item If $r_1, \ldots, r_{d+1} \in \rho$ and $s = g(r_1, \ldots, r_{d+1})$ and $\exists i\colon s(i) = n-1$ then $\forall j\in \set{1, \ldots, d}\colon s(j)\in \set{0,n-1}$
  \item If $r_1, \ldots, r_{d+1} \in \rho$ and $s = g(r_1, \ldots, r_{d+1})$ and $\exists i\colon s(i) \in \set{1, \ldots,n-2}$ then $\forall j \neq i\colon s(j) = 0$
 \end{enumerate}
 \item $t \in \sigma \Ra t \not \in g(\rho \setminus \set{t})$
\end{enumerate}

3.1.1: Pick any $s$ in $g(\rho)$. Then there exist $r_1, \ldots, r_{d+1} \in \rho$ such that $s=g(r_1, \ldots, r_{d+1})$. Since there is at least one zero in each $r_i \in\rho$, we have at least $d+1$ zeros in the $d\times (d+1)$ matrix given by $(r_1, \ldots, r_{d+1})$ and therefore at least one row $j$ with two zero entries (WLOG we choose the first two). But then $s(j) = g(0,0, \ldots) = 0$. Therefore $\{n-1\}^d \not \sse g(\rho)$.\\

3.1.2: Pick an arbitrary index $j\neq i$. If the value of $s(i)$ is produced through the nu-case, there exist distinct indices $\iota_1, \iota_2$ such that $r_{\iota_1}(i)=r_{\iota_2}(i)=n-1$ and therefore $r_{\iota_1}(j), r_{\iota_2}(j) \in \set{0, n-1}$ which implies $s(j)\in \set{0, n-1}$.\\
If, however, the value of $s(i)$ is produced through $g^*$, we would need at least two nonzero, non-$(n-1)$ entries in row $i$ (here $n-2$ and $n-3$) because any zero would imply $s(i)=0$ and all but one entry being $n-1$ would imply that $s(i)$ is produced through the nu-case. This results in two zeros in row $j$.
\begin{center}
$\begin{array}{rlcl}
&&\vdots &\\
 s(i) &=n-1 	&=& g^*(n-1, n-2, n-3, \ldots) \\
&& \vdots &\\
s(j) &= 0 	&=& g(\{0, n-1\},\; 0,\quad 0,  \ldots)\\
&& \vdots &
\end{array}$
\end{center}
But then $s(j)=0$.\\

3.1.3: If $s(i)\in \set{1, \ldots, n-2}$ either $s(i)=\max\set{r_1(i), \ldots, r_{d+1}(i)}$ with $0 \not\in \set{r_1(i), \ldots, r_{d+1}(i)}$ or all but one of the arguments equal $s(i)$. In both cases we get\footnote{WLOG we chose the last column to be the unspecified one.}: 
\begin{center}
$\begin{array}{rclccccl}
s(1) &= &g(  &0 &\ldots &0 &? &) \\ 
\vdots & &  &\vdots & &\vdots & &) \\
s(i-1) &= &g(  &0 &\ldots &0 &? &) \\
s(i) &= &g(  &r_1(i) &\ldots &r_d(i) &r_{d+1}(i) &) \\
s(i+1) &= &g(  &0 &\ldots &0 &? &) \\
\vdots & &  &\vdots & &\vdots & &) \\
s(d) &= &g(  &0 &\ldots &0 &? &).
\end{array}$
\end{center}
Therefore all other $s(j)$ for $j\in \set{1,\ldots,d}\setminus \set{i}$ must be zero.\\

3.2: Choose $t\in \sigma$. Then there are $i\in \set{1, \ldots, d}$ and $a\in\set{1, \ldots, n-2}$ with $t(i) = a$ and only $t$ satisfies this equation. By conservativity of $g$ and uniqueness of $t$ it follows that $t\not \in g(\rho\setminus \set{t}) $ because $g$ would need $a$ in the $i$-th line of arguments to satisfy $g(\ldots)(i)=a$. \\

Therefore (3) is satisfied and Lemma \ref{lm:essTool} yields $\gamma_d(n) \geq d(n-2)$ for $n\geq 3$ and $d\geq 3$.
\end{proof}

Even though this generalizes some of the techniques used in the previous section, we think that this is by no means sharp. It seems that the general case needs relations and operation with a much 'deeper' interplay or an entirely different approach. Finding such $f,g,\rho$ and $\sigma$ and understanding their interplay is an open problem.

\section{Summary}
In this paper we have derived that any clone on a finite set with $n$ elements containing a conservative 3-ary near-unanimity operation can be generated by its $2n$-ary part if $n\geq 4$. For $n=3$ the 5-ary and for $n=2$ the 3-ary part is sufficient.\\
Furthermore we obtained lower bounds for conservative near-unanimity operations of higher arity.

\bibliographystyle{amsplain} % amsplain  plainyr   amsalpha   is-alpha  
\bibliography{lit_generatingClones}

\end{document}